\newtheorem{theorem}{Theorem}
\newtheorem*{theorem*}{Theorem}
\newtheorem{lemma}{Lemma}
\newtheorem*{lemma*}{Lemma}
\newtheorem{proposition}{Proposition}
\newtheorem*{proposition*}{Proposition}
\newtheorem*{conjecture*}{Conjecture}
\title{Bounds on the largest prime factor of a negative discriminant with one class per genus}
\author{John Armitage}
\begin{document}

\maketitle

%\begin{abstract}
% It was conjectured by Gauss that any negative discriminant with at least 32 genera has at least two classes of binary quadratic forms in each genus. We prove that such a discriminant cannot have a particularly large prime factor, by an argument similar to that of Baker's solution to Gauss' class number one problem.
%\end{abstract}

\section{Introduction}
\thispagestyle{specialfooter}

Euler's idoneal numbers are numbers $n$ such that if $m$ is an odd number properly and uniquely represented by the binary quadratic form $x^2+ny^2$, and $m$ is coprime to $n$, then $m$ is prime. Euler used these to produce large primes, and found 65 such numbers $n$, conjecturing that his list is complete.

By Gauss' theory of genera, those even negative discriminants $-4n$ which have one class of binary quadratic form in each genus yield idoneal numbers $n$, and the opposite implication was established by Grube \cite{Grube}. Euler's form $x^2+ny^2$ is then the principal form of the discriminant $-4n$. Gauss conjectured that if a negative discriminant has at least $32$ genera, there are at least two classes of binary quadratic forms in each genus. Their finiteness was proved by Heilbronn and Linfoot \cite{Heilbronn}, and Weinberger \cite{Weinberger} proved by means of Tatuzawa's theorem that only one further such fundamental discriminant may exist, and that no more exist under the Generalized Riemann Hypothesis, and it is further known that if $8\Vert d$, and $d$ is a fundamental discriminant with one class per genus, then $4d$ will also have one class per genus, and no further discriminants. A useful survey with a historical overview may be found in \cite{Kani}. We make use of a computation performed by the author implementing Swift's tests to give a lower bound of $9.8\cdot10^{18}$ for any further discriminant with one class per genus, described in section 2.

We consider here a special case of the one class per genus problem, where the discriminants under consideration have a large prime factor. The main argument proceeds in a similar manner to Baker's solutions of the small class number problems (see for example \cite{Baker}) considering an expansion in terms of the Epstein zeta functions associated to binary quadratic forms of products of $L$-functions, and reducing this to a linear form in logarithms. In our case, the height of the coefficients in the linear form in logarithms remains small as the minima of the binary quadratic forms are all divisors of the discriminant -- unlike both the general case of the one class per genus problem, and in general for discriminants, and the large prime factor ensures that the minima are small in comparison to the discriminant. We prove the following,

\begin{theorem}
 Suppose that $d$ is a negative fundamental discriminant with at least $32$ genera, and let $P$ its largest prime factor. Then $d$ has at least two classes of binary quadratic forms in each genus if
 \begin{equation*}
  P\geq 5\cdot10^{15}\lvert d\rvert^{1/2}(\log\lvert d\rvert)^5\log\log \lvert d\rvert.
 \end{equation*}
\end{theorem}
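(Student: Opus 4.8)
I would prove the contrapositive: assuming that $d$ has at least $32$ genera and \emph{one} class of forms per genus, I would bound $P$ from above. The first step is structural. The number of genera is $2^{t-1}$, where $t=\omega(d)$ is the number of distinct prime divisors, so the hypothesis gives $t\ge 6$; one class per genus forces the class group to be $(\mathbb{Z}/2\mathbb{Z})^{t-1}$, whence every reduced form is ambiguous and its minimum $a_i$ divides $d$. A reduced positive definite form satisfies $|d|\ge 3a_i^2$, i.e. $a_i\le\sqrt{|d|/3}<P$, so $P\nmid a_i$ and hence $a_i\mid d/P$, giving $a_i\le |d|/P$. This is the only place the size of $P$ enters, and it is what keeps every minimum small relative to $\sqrt{|d|}$.

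Next I would build the analytic identity, following Baker \cite{Baker}. Writing $Z_i(s)$ for the Epstein zeta function of the $i$-th class and $\tau_i=(-b_i+\sqrt d)/(2a_i)$ for the associated CM point, so that $\mathrm{Im}\,\tau_i=\sqrt{|d|}/(2a_i)$, one has $\zeta(s)L(s,\chi_d)=\zeta_K(s)=\sum_i Z_i(s)$, while for each nonprincipal genus character $\psi$, corresponding to a factorization $d=d_1d_2$ into coprime fundamental discriminants,
\[
  \sum_i \psi(i)\,Z_i(s)=L(s,\chi_{d_1})\,L(s,\chi_{d_2}).
\]
Kronecker's first limit formula supplies the Laurent expansion of each $Z_i$ at $s=1$, whose constant term contains $-\log a_i-4\log|\eta(\tau_i)|$; the simple poles cancel on the left because $\sum_i\psi(i)=0$, and comparing constant terms yields, for every such $\psi$, a relation of the form
\[
  \sum_i \psi(i)\bigl(\log a_i+4\log|\eta(\tau_i)|\bigr)=-\frac{w\sqrt{|d|}}{2\pi}\,L(1,\chi_{d_1})\,L(1,\chi_{d_2}).
\]

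Now I would extract a linear form in logarithms. Using $4\log|\eta(\tau_i)|=-\tfrac{\pi\sqrt{|d|}}{6a_i}+O(e^{-\pi\sqrt{|d|}/a_i})$ from the product expansion of $\eta$, the relation rearranges so that a $\mathbb{Z}$-linear combination of $\pi\sqrt{|d|}\sum_i\psi(i)/a_i$, of the logarithms $\log a_i$ with $a_i\le |d|/P$, and of $\frac{w\sqrt{|d|}}{2\pi}L(1,\chi_{d_1})L(1,\chi_{d_2})$ is forced to be exponentially small, of size $O(e^{-\pi\sqrt{|d|}/\max_i a_i})=O(e^{-\pi P/\sqrt{|d|}})$. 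Here I would choose the factorization so that the real quadratic factor $d_2$ is a \emph{small} fundamental discriminant; then $\frac{w\sqrt{|d|}}{2\pi}L(1,\chi_{d_1})L(1,\chi_{d_2})$ simplifies (the $\pi$ and $\sqrt{|d|}$ cancelling against the negative factor) to an integer multiple $\tfrac{2wh(d_1)h(d_2)}{w_1}\log\epsilon_{d_2}$ of the logarithm of the fundamental unit $\epsilon_{d_2}$, of small height. The left-hand side is then a genuine linear form in logarithms of algebraic numbers to which Baker's effective lower bound applies. Because the minima $a_i\mid d/P$ are small and $\epsilon_{d_2}$ has bounded height, both the algebraic numbers and the integer coefficients entering Baker's bound have height polynomial in $|d|$, so that bound is only $\exp(-\mathrm{poly}(\log|d|))$; comparing it with the analytic estimate $e^{-\pi P/\sqrt{|d|}}$ forces $P/\sqrt{|d|}\ll\mathrm{poly}(\log|d|)$. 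A careful accounting of the constants in the explicit linear-forms estimate, with the number of logarithms governed by $t=\omega(d)\ll\log|d|/\log\log|d|$, produces the exponent $5$ on $\log|d|$, the factor $\log\log|d|$, and the constant $5\cdot10^{15}$.

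The main obstacle is precisely the height control just invoked. In the generic class number argument the natural algebraic quantity is a singular modulus or a norm such as $H_d(0)$, of height $\asymp e^{\pi\sqrt{|d|}}$, and likewise an unrestricted regulator $\log\epsilon_{d_2}$ can be as large as $\sqrt{|d|}$; feeding either into Baker gives only the far weaker threshold $P\gg|d|$. The essential work is therefore to arrange the relation so that every quantity subject to Baker's theorem has height bounded by a fixed power of $|d|$ — this is where the facts that every minimum is a \emph{small divisor} of $d$ and that $d_2$ may be taken small are together indispensable — and to verify the requisite algebraicity of the $\eta$-values and of $\epsilon_{d_2}$ via complex multiplication and the Chowla–Selberg formula. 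Ruling out the degenerate case in which the linear form vanishes identically (by varying $\psi$, of which there are at least $31$ nonprincipal choices) and threading the dependence on $P$ simultaneously through Baker's lower bound and the $\eta$-expansion error so that the final constants emerge as stated would complete the argument.
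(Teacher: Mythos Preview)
Your overall strategy (expand $L$-function products over forms, use that the minima divide $d$ and are $\le |d|/P$, obtain a small linear form in logarithms, invoke an effective lower bound) matches the paper, but two concrete choices prevent your argument from reaching the stated polylogarithmic threshold.

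First, you take the real quadratic field to come from a genus character, so $d_2\mid d$. Nothing in the hypotheses forces $d$ to have a small prime factor: with $t\ge 6$ and $|d|/P\le\sqrt{|d|}$ one can only say the least prime divisor is $\le |d|^{1/10}$, and then Hua's bound gives merely $\log\epsilon_{d_2}\ll |d|^{1/20}\log|d|$, not $\mathrm{polylog}(|d|)$. The paper sidesteps this by choosing an \emph{auxiliary} real discriminant $k=q_1q_2$ with $q_1,q_2$ the first two odd primes \emph{not} dividing $d$ and $q_1q_2\equiv 1\ (4)$; then $k\le 1.62(\log|d|)^2$, so $h(k)$ and $\log\epsilon_k$ are genuinely $\mathrm{polylog}(|d|)$. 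Second, going through the Kronecker limit formula injects the term $\sum_i\psi(i)\log a_i$, which is an integer combination of the $\log p$ for $p\mid d$, $p\ne P$; your linear form therefore involves about $t=\omega(d)\asymp \log|d|/\log\log|d|$ logarithms. Baker-type constants grow like $n^{cn}$ in the number $n$ of logarithms, which here is already a positive power of $|d|$, so the resulting lower bound cannot beat $\exp(-|d|^{c})$ and you only get $P\ll |d|^{1/2+c}$ rather than $P\ll |d|^{1/2}\mathrm{polylog}(|d|)$. The paper avoids these extra logarithms entirely: in Baker's expansion of $L(1,\chi)L(1,\chi\chi')$ with $\chi=(k/\cdot)$ and $k$ not a prime power, the main term is $\tfrac{\pi^2}{6}\,Q\sum_f \chi(a)/a$, and because every minimum $a$ divides $d$ this sum is the \emph{rational} number $C/d$. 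One is left with a linear form in exactly two logarithms, $\beta\log\epsilon_k-\log i$, to which the sharp Mignotte--Waldschmidt two-logarithm theorem applies and yields the exponent $5$ and constant $5\cdot 10^{15}$.
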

Watkins \cite{watkinsspectral} indicates that the spectral methods employed for his proof of the class number one problem would yield the impossibility of a prime factor $P\gg \lvert d\rvert^{1-\alpha}$, for some positive $\alpha$, though these methods too are restricted to the situation of small minima.

For a binary quadratic form $f(x,y)=ax^2+bxy+cy^2$ we use the notation $(a,b,c)$, and $a$ is its minimum if $f$ is reduced. By the theory of genera, if there is one class of binary quadratic forms per genus, then all forms are ambiguous, i.e. their square under Gaussian composition is the principal form, and ambiguous forms which are reduced have the form $(a,0,c)$, $(a,a,c)$, or $(a,b,a)$.

\section{Sieving small discriminants}

We describe here the algorithm used to eliminate negative fundamental discriminants with one class per genus. It is an algorithm to combine linear congruences, incorporating a method of bit--packing whereby at each iteration one 32--bit OR operation applies a linear congruence condition simultaneously to 32 potential discriminants.

\begin{proposition}
 There are no further negative discriminants with one class per genus satisfying $\lvert d\rvert\leq 9.8\cdot10^{18}$.
\end{proposition}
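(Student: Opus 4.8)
The plan is to recast the one-class-per-genus condition as the non-existence of a non-ambiguous reduced form, and then to detect such forms by a congruence sieve. By the genus theory recalled in the introduction, the number of genera equals the number of ambiguous classes, so a negative fundamental discriminant $d$ has one class per genus precisely when every reduced form of discriminant $d$ is ambiguous; equivalently, $d$ has \emph{more} than one class per genus if and only if there is a reduced form $(a,b,c)$ that is none of $(a,0,c)$, $(a,a,c)$, $(a,b,a)$, that is, one with $0<\lvert b\rvert<a<c$. Such a form and its inverse $(a,-b,c)$ are then distinct non-ambiguous classes. Since $d$ is fundamental every form is primitive, so no coprimality condition intervenes. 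To prove the proposition I would therefore show that every negative fundamental discriminant with $\lvert d\rvert\leq 9.8\cdot10^{18}$ not already on the known finite list admits such a non-ambiguous reduced form.

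The existence of a non-ambiguous reduced form is a union of congruence conditions, which is what makes it amenable to sieving. For fixed $a$ and $b$ with $0<\lvert b\rvert<a$, the equation $b^2-4ac=d$ forces $d\equiv b^2\pmod{4a}$, and conversely any such $d$ yields the integer $c=(b^2-d)/(4a)$, which satisfies $c>a$ exactly when $\lvert d\rvert>4a^2-b^2$; the reduction inequality $a<c$ bounds the leading coefficient by $a\leq\sqrt{\lvert d\rvert/3}\leq 1.81\cdot10^9$. Hence the discriminants carrying a non-ambiguous form with given $(a,b)$ are precisely those in the arithmetic progression $d\equiv b^2\pmod{4a}$ lying in the admissible range, and Swift's tests supply an efficient enumeration of the relevant pairs $(a,b)$ and their moduli. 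The sieve then marks these progressions in a bit-array indexed by the candidate discriminants, processed in segments to fit memory: the bit-packing stores $32$ consecutive candidates per machine word, so that a single OR against a precomputed mask eliminates all $32$ discriminants meeting a given congruence at once, as described in Section 2. Once every admissible pair $(a,b)$ has been applied, the unmarked positions are exactly the negative fundamental discriminants with one class per genus, and I would confirm that they coincide with the known list.

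The main obstacle is not conceptual but one of scale together with rigorous completeness. With $a$ ranging up to roughly $1.8\cdot10^9$ the naive cost is prohibitive, so the bit-packed segmented sieve of Section 2 is what makes the computation feasible, and the accounting of OR operations must be organised so that the whole range is covered. Equally delicate is certifying correctness: that restricting to $a\leq\sqrt{\lvert d\rvert/3}$ discards no discriminant, that the restriction to fundamental discriminants and the boundary between ambiguous and non-ambiguous forms are treated exactly, and that each progression is indexed without off-by-one error. Only once these points are controlled does a clean run genuinely establish the stated bound of $9.8\cdot10^{18}$ rather than merely corroborate it.
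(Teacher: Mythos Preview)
Your conceptual set-up is correct: one class per genus is equivalent to every reduced form being ambiguous, and the existence of a non-ambiguous reduced form $(a,b,c)$ with $0<b<a<c$ is detected by the congruence $d\equiv b^2\pmod{4a}$. But the algorithm you outline---mark, for every pair $(a,b)$ with $a$ ranging up to $\sqrt{\lvert d\rvert/3}\approx 1.8\cdot10^{9}$, the progression $d\equiv b^2\pmod{4a}$ across the whole range $\lvert d\rvert\leq 9.8\cdot10^{18}$---is not the method of Section~2 and is not feasible. The total number of (discriminant, non-ambiguous form) incidences in that range is of order $N^{3/2}\approx10^{28}$; a factor of $32$ from bit-packing does not rescue this. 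Swift's tests, incidentally, are per-discriminant criteria, not a device for enumerating sieving pairs $(a,b)$.

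The paper organises the computation quite differently. It sieves only with \emph{prime} leading coefficients $a=p$, and only the primes up to $p_{169}$: for prime $p$ the union over $b$ of the congruences $d\equiv b^2\pmod{4p}$ collapses to the single test ``$d$ is a non-zero quadratic residue modulo $p$'', so each modulus contributes one Legendre-symbol condition rather than $p$ progressions. This sieve is deliberately \emph{incomplete}: a discriminant could in principle evade all $169$ primes and still carry a non-ambiguous form with large or composite leading coefficient, so any survivor is handed off to the full Swift tests individually. The point is that the expected density of survivors is $\prod_{p\leq p_{169}}(p+1)/(2p)$, essentially $2^{-169}$, so one anticipates---and the run confirms---that nothing survives to that stage. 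The loop structure is also not a segmented sweep over consecutive $d$: it iterates over admissible residues modulo $P_1P_2$ (with $P_1=3\cdot5\cdots19$, $P_2=23\cdots47$) via the Chinese remainder theorem, and the $32$ bits of each word index the translates $a+kP_1P_2$ for $0\leq k<32$, so that a single pass covers $32P_1P_2>9.8\cdot10^{18}$. What your plan is missing, then, is the two-stage architecture ``cheap incomplete prime filter, then exhaustive verification on the (empirically empty) set of survivors'', together with the reduction of each prime modulus to a quadratic-residue test.
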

\begin{proof}
The main sieving test used in the algorithm is to test whether a negative discriminant is congruent to a non--zero quadratic residue $\beta=x^2$ modulo a prime $p<\sqrt{\lvert d\rvert}/2$ -- in this case, for one of the possibilities of $x$, $d= x^2-4kp$ for some $k$, which gives a reduced non--ambiguous form $(p,x,k)$ when $k>p$. However, when $k=p$, $p\geq\sqrt{\lvert d\rvert}/2$. As we sieve with primes up to $p_{169}$, supposing that $d$ is sieved by some prime, if $\lvert d\rvert\geq10^7$, then we obtain a non--ambiguous form. The range $\lvert d\rvert\leq10^7$ may be tested by a direct application of Swift's tests to each $d$.

Firstly two products $P_1, P_2$ of primes are taken, where $P_1 = 3\cdot5\cdot\cdots\cdot19$, and $P_2 = 23\cdot29\cdot\cdots\cdot47$. For each number from zero to $P_i$, we eliminate those values which are congruent to the negative of a non--zero quadratic residue modulo any prime dividing $P_i$ -- the surviving values are stored in an array for each $P_i$. 

An array of 32--bit bit--vectors (each vector stored as an unsigned integer) of length $q$, indexed by $a$, is prepared for each prime $q$, from the $16$'th to the $169$'th prime, where the $k$'th bit of the unsigned integer is set to $1$  if $a+kP_1P_2$ is the negative of a non--zero quadratic residue modulo $q$, and set to zero otherwise.

The program has an outer loop over the surviving modular values modulo $P_1$, and an inner loop over the surviving modular values modulo $P_2$.  At each iteration of the inner loop, the value of the candidate $a$, bounded by $P_1P_2$, is constructed by the Chinese remainder theorem, and a candidate bit--vector (initially set to zero) is bitwise--ORed with the bit--vector representing whether $a+kP_1P_2$ is the negative of a non--zero quadratic residue. In this manner we test $32$ possible discriminants at each iteration, testing $32P_1P_2>9.8\cdot10^{18}$ possible discriminants once the outer loop has completed.

We check after multiple ORs whether the candidate vector is entirely composed of $1$'s, and repeat with the remaining primes. If a vector survives sieving with all primes up to the 169'th, then the explicit values of the surviving discriminants are computed (i.e. $-(a+kP_1P_2)$, where the $k$'th bit is still zero), and the remainder of Swift's tests are performed upon it. A tally is kept of those discriminants which survive all testing, and output as they arise -- all discriminants were eliminated in the sieving process.
\end{proof}

The code used is available on request.

\section{The main equality}

This follows from \cite{Baker}. Let $k>0$, and $d<0$ be the discriminants of the fields $\mathbb{Q}(\sqrt{k})$, $\mathbb{Q}(\sqrt{d})$ respectively, and $\chi$, $\chi'$ be the Kronecker symbols $\left(\frac{k}{\cdot}\right)$, $\left(\frac{d}{\cdot}\right)$ respectively. Then assuming $(k,d)=1$, we have the following,
\begin{equation*}
 L(1,\chi)L(1,\chi\chi')=\frac{1}{2}\sum_f\left(\frac{\pi^2}{3}\frac{\chi(a)}{a}\prod_{p|k}\left(1-\frac{1}{p^2}\right)+\sum_{r=-\infty}^\infty A_{r,f}e^{\pi irb/(ka)}\right),
\end{equation*}
where the sum over $f$ runs over the set of inequivalent reduced binary quadratic forms of discriminant $d$, and where $a$ and $b$ are the first and second coefficients of $f$. The following hold,
\begin{equation*}
 \lvert A_{r,f}\rvert\leq \frac{4\pi}{\sqrt{\lvert d\rvert}}\lvert r\rvert e^{-\pi\lvert r\rvert\sqrt{\lvert d\rvert}/(ka)}
\end{equation*}
for $r\neq 0$, and 
\begin{equation*}
 A_{0,f} = \begin{cases}
        -\frac{4\pi}{k\sqrt{\lvert d\rvert}}\chi(a)\log p & \text{$k$ a prime power,}\\
        0 & \text{otherwise.}
       \end{cases}
\end{equation*}
By the class number formula,
\begin{align*}
 L(1,\chi) = \frac{2h(k)\log\epsilon}{\sqrt{k}}, && L(1,\chi\chi')=\frac{h(kd)\pi}{\sqrt{k\lvert d\rvert}},
\end{align*}
where $\epsilon$ is the fundamental unit of $\mathbb{Q}(\sqrt{k})$, and $h(k)$, $h(kd)$ are the class numbers of the fields $\mathbb{Q}(\sqrt{k})$ and $\mathbb{Q}(\sqrt{kd})$ respectively.

\section{Estimates for some quantities}

We have previously computed that there are no further discriminants with one class per genus such that $\lvert d\rvert \leq 9.8\cdot10^{18}$ -- we use this fact to simplify the expressions for subsequent bounds. Regarding the forms, for a reduced form $(a,b,a)$ to occur, $\lvert d\rvert=4a^2-b^2$, with $0\leq b\leq a$, so $a\geq\sqrt{\lvert d\rvert/4}$, and $(2a-b)| d$, so that $d$ has a factor between $\sqrt{\lvert d\rvert/4}$ and $\sqrt{\lvert d\rvert}$. However  
$P>2\lvert d\rvert^{1/2}$, so all factors of $\lvert d\rvert$ are either $<\sqrt{\lvert d\rvert}/2$ or $>2\sqrt{\lvert d\rvert}$. So all forms are of the form $(a,0,c)$ or $(a,a,c)$, and so all occurring minima divide $d$.

As $\lvert d\rvert/P\leq\sqrt{\lvert d\rvert}$, applying Robin's bound (Theorem 12, \cite{Robinomega}) on the number of prime factors of a number,
 \begin{equation*}
  \omega(n)\leq\frac{\log n}{\log\log n}+1.45743\frac{\log n}{(\log\log n)^2},
 \end{equation*}
we have, for $\lvert d\rvert\geq 9.8\cdot10^{18}$, as $\lvert d\rvert/P\leq\sqrt{\lvert d\rvert}$,
\begin{equation*}
\omega(d)\leq\frac{\log \sqrt{\lvert d\rvert}}{\log\log \sqrt{\lvert d\rvert}}+1.45743\frac{\log \sqrt{\lvert d\rvert}}{(\log\log \sqrt{\lvert d\rvert})^2}+1\leq\frac{\log\lvert d\rvert}{\log\log\lvert d\rvert}.
\end{equation*}

\subsection{The auxiliary factors}

Here we introduce the parameter $k$ which will be of importance to our analysis, 
\begin{lemma}
 Let $k=q_1q_2$, where $q_1,q_2$ are the first two primes which do not divide $d$, and satisfy $q_1q_2\equiv1(4)$. Then
 \begin{enumerate}\label{kbounds}
  \item $k\leq 1.62(\log\lvert d\rvert)^2$,
  \item The class number of $\mathbb{Q}(\sqrt{k})$ is bounded by $0.64\log\lvert d\rvert$,
  \item The regulator of $\mathbb{Q}(\sqrt{k})$ is bounded by $1.69\log\lvert d\rvert\log\log\lvert d\rvert$,
  \item The class number of $\mathbb{Q}(\sqrt{kd})$ is bounded by $1.01\sqrt{\lvert d\rvert}\log\lvert d\rvert$,
  \item The height of $Q=\prod_{p|k}\left(1-\frac{1}{p^2}\right)$ is bounded by $2.16(\log\lvert d\rvert)^4$.
 \end{enumerate}

\end{lemma}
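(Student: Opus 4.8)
All five estimates rest on the bound $\omega(d)\le \log\lvert d\rvert/\log\log\lvert d\rvert$ proved above, on the class number formulae recalled above, and on explicit estimates for the relevant $L$-values; I would establish the bound on $k$ first, since the other four feed off it. As $q_1,q_2$ are odd, the condition $q_1q_2\equiv 1\pmod 4$ only requires $q_1,q_2$ to agree modulo $4$. Among the first $\omega(d)+3$ odd primes at least three fail to divide $d$, and of any three odd primes two lie in the same class modulo $4$, so both $q_1,q_2$ may be taken from the first $\omega(d)+O(1)$ primes. Feeding $n=\omega(d)+O(1)\le \log\lvert d\rvert/\log\log\lvert d\rvert+O(1)$ into an explicit Rosser--Schoenfeld bound $p_n\le n(\log n+\log\log n)$, and using $\log\bigl(\log\lvert d\rvert/\log\log\lvert d\rvert\bigr)\le \log\log\lvert d\rvert$, gives $p_n\ll\log\lvert d\rvert$ and hence $k=q_1q_2\le p_n^2\ll(\log\lvert d\rvert)^2$; the constant $1.62$ emerges once the prime bound is inserted explicitly.

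For the real field $\mathbb{Q}(\sqrt k)$ one has $k\equiv1\pmod 4$, so its discriminant is $k$ and $\sqrt k\le\sqrt{1.62}\,\log\lvert d\rvert$. I would start from $h(k)\log\epsilon=\tfrac12\sqrt k\,L(1,\chi)$. The regulator bound uses only $h(k)\ge1$, giving $\log\epsilon\le\tfrac12\sqrt k\,L(1,\chi)$; with the standard estimate $L(1,\chi)\le\tfrac12\log k+O(1)=O(\log\log\lvert d\rvert)$ and $\sqrt k\ll\log\lvert d\rvert$ this yields $1.69\log\lvert d\rvert\log\log\lvert d\rvert$. For the class number I instead need a lower bound on $\log\epsilon$: the least solution of $t^2-ku^2=\pm4$ has $u\ge1$ and $t\ge\sqrt{k-4}$, so $\epsilon\ge\tfrac12(\sqrt{k-4}+\sqrt k)$ and $\log\epsilon\ge\tfrac12\log k-O(1/k)$. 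Matching this against $L(1,\chi)\le\tfrac12\log k+O(1)$, the two factors of $\log k$ cancel, leaving $h(k)=\tfrac12\sqrt k\,L(1,\chi)/\log\epsilon\le(\tfrac12+o(1))\sqrt k\le0.64\log\lvert d\rvert$.

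For the imaginary field $\mathbb{Q}(\sqrt{kd})$ there is no regulator, and I would bound the class number straight from $h(kd)=\pi^{-1}\sqrt{k\lvert d\rvert}\,L(1,\chi\chi')$, using an explicit upper bound for $L(1,\chi\chi')$ of size $O(\log(k\lvert d\rvert))$ together with $\sqrt k\ll\log\lvert d\rvert$ to reach the stated bound by a direct computation. Lastly $Q=\prod_{p\mid k}(1-p^{-2})$ is a rational number whose denominator divides $k^2$ and whose numerator is smaller, so its height is at most $k^2$; squaring the bound on $k$ gives the factor $2.16(\log\lvert d\rvert)^4$.

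The crux is the class number of $\mathbb{Q}(\sqrt k)$: the universal regulator bound $\log\epsilon\ge\log\tfrac{1+\sqrt5}{2}$ is far too weak and would cost an extra $\log\log\lvert d\rvert$, so everything hinges on the sharper discriminant-dependent bound $\log\epsilon\ge\tfrac12\log k-O(1/k)$ from Pell's equation cancelling the logarithmic growth of $L(1,\chi)$. Making this cancellation and the various $L$-value estimates explicit, with constants tight enough to reach $0.64$ and to keep the imaginary class number near $1.01\sqrt{\lvert d\rvert}\log\lvert d\rvert$, is where the real effort lies.
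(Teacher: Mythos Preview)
Your overall structure matches the paper's: bound $k$ first via $\omega(d)$ and Rosser--Schoenfeld, then feed that into the remaining estimates. For parts 1 and 5 your argument is essentially identical to the paper's. For parts 2--4 you work directly from the analytic class number formula together with explicit bounds on $L(1,\chi)$, whereas the paper simply quotes ready-made inequalities: Le's bound $h(k)\le\sqrt{k}/2$ for squarefree $k\equiv1\pmod4$, Hua's bound $\log\epsilon\le\sqrt{k}(\tfrac12\log k+1)$, and Paulin's bound $h(D)<\pi^{-1}\sqrt{|D|}(2+\log|D|)$. Your route is more self-contained; the paper's is quicker and delivers the stated constants without further work. (Incidentally, both your method and the paper's own proof of part 4 actually produce $1.01\sqrt{|d|}(\log|d|)^2$, not the $1.01\sqrt{|d|}\log|d|$ in the lemma statement.)

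The one place where your approach genuinely needs more than you have written is part 2. Your ``cancellation'' of the two $\tfrac12\log k$ factors gives
\[
h(k)\;\le\;\frac{\sqrt{k}}{2}\cdot\frac{\tfrac12\log k+C}{\tfrac12\log k-O(1/k)}\;=\;\frac{\sqrt{k}}{2}\Bigl(1+\frac{2C}{\log k}+O(1/k)\Bigr),
\]
not $\tfrac12\sqrt{k}$ on the nose. Here $k$ is \emph{bounded} (at most $1.62(\log|d|)^2\lesssim3100$), so $\log k\lesssim8$ and your ``$o(1)$'' is not small. Since $\sqrt{k}/2\le0.637\log|d|$ already grazes $0.64$, any positive additive constant $C$ in $L(1,\chi)\le\tfrac12\log k+C$ pushes you over. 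The paper sidesteps this entirely by invoking Le's theorem, which gives $h(k)\le\sqrt{k}/2$ exactly. Your argument is essentially how one would \emph{prove} Le's theorem, but to recover the constant $0.64$ you would need an $L$-value bound at least as sharp as the one Le uses, which is stronger than the generic $\tfrac12\log k+O(1)$ you invoke.
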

\begin{proof}

\emph{1}. Firstly, we have $q_1,q_2\leq p_{\omega(d)+4}$, as taking the first three odd primes not dividing $d$, two of them will be equal modulo $4$, so that their product is congruent to $1$ modulo $4$, and hence $k=q_1q_2$ is the discriminant of the quadratic field $\mathbb{Q}(\sqrt{k})$. By results of Rosser and Schoenfeld (see Corollary to Theorem 3, \cite{Rosser}), for $n\geq6$,
\begin{equation*}
  p_n \leq n(\log n+\log\log n),
 \end{equation*}
 where $p_n$ is the $n$'th prime. So the $q_i$ are bounded by
 \begin{equation*}
  q_i\leq p_{\omega(d)+4}\leq(\omega(d)+4)(\log(\omega(d)+4)+\log\log(\omega(d)+4))\leq 1.27\log\lvert d\rvert,
\end{equation*}
and so $k$ is bounded by $k\leq 1.62(\log\lvert d\rvert)^2$.

\emph{2}. By a result of Le (Theorem (a), \cite{Realclassbound}), as $k$ is square--free and $\equiv1(4)$, The class number of the real quadratic field $\mathbb{Q}(\sqrt{k})$ is bounded by $\sqrt{k}/2\leq0.64\log\lvert d\rvert$. 

\emph{3}. Let $\epsilon$ be the fundamental unit of $\mathbb{Q}(\sqrt{k})$, then by a result of Hua, \cite{Huaregulator}, p.329,
\begin{equation*}
 \log\epsilon \leq \sqrt{k}\left(\frac{1}{2}\log(k)+1\right),
\end{equation*}
which yields a bound of
\begin{align*}
 \sqrt{k}\left(\frac{1}{2}\log(k)+1\right)&\leq 1.27\log\lvert d\rvert(\log(1.27\log \lvert d\rvert) +1)\\
 &\leq1.69\log\lvert d\rvert\log\log\lvert d\rvert.
\end{align*}

\emph{4}. By a result of Paulin (Proposition 2.2, \cite{Paulin}), If $D<0$ is an integer congruent to $0$ or $1$ modulo $4$, then 
 \begin{equation*}
  h(D)<\frac{1}{\pi}\sqrt{\lvert D\rvert}(2+\log\lvert D\rvert).
 \end{equation*}
As $kd\equiv0$ or $1(4)$,
\begin{align*}
 h(kd)&\leq2\sqrt{k\lvert d\rvert}(2+\log 2k\lvert d\rvert)/\pi\\
 &\leq 2.54\log\lvert d\rvert\sqrt{\lvert d\rvert}(2+\log( 3.24(\log\lvert d\rvert)^2\lvert d\rvert))/\pi\\
 &\leq 1.01\sqrt{\lvert d\rvert}(\log\lvert d\rvert)^2.
\end{align*}

\emph{5}. For the term $Q=\prod_{p|k}\left(1-\frac{1}{p^2}\right)$, expanding this into a fraction gives
\begin{equation*}
 \prod_{p|k}\left(1-\frac{1}{p^2}\right) = \frac{(q_1^2-1)(q_2^2-1)}{q_1^2q_2^2},
\end{equation*}
and we have $H(Q) \leq 2.61(\log\lvert d\rvert)^4$ by our previous bounds on $q_i$.

\end{proof}

\begin{lemma}
 The quantity 
 \begin{equation*}
  \sum_f\frac{\chi(a)}{a} = \frac{C}{d}
 \end{equation*}
is bounded in height by $1.84\lvert d\rvert\log\log\lvert d\rvert$.
\end{lemma}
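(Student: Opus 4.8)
The plan is to clear denominators to exhibit $C$ as an integer, reduce the height bound to an estimate of a divisor sum, and then control that sum by Gronwall's theorem in Robin's effective form.

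First I would use that every occurring minimum $a$ divides $\lvert d\rvert$, as established above. Then $d/a$ is an integer for each reduced form $f$, so
\[
 C \;=\; \sum_f \chi(a)\,\frac{d}{a}
\]
is an integer and $\sum_f \chi(a)/a = C/d$, which is the asserted shape. The height of the rational $C/d$ is $\max(\lvert C\rvert,\lvert d\rvert)$, and reduction to lowest terms only lowers it; since $\lvert\chi(a)\rvert\le 1$,
\[
 \lvert C\rvert \;\le\; \sum_f \frac{\lvert d\rvert}{a} \;=\; \lvert d\rvert\sum_f \frac{1}{a},
\]
so it remains to bound $\sum_f 1/a$.

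The crux is that the minima of the reduced forms are \emph{distinct} divisors of $\lvert d\rvert$. After the previous section every reduced form has the shape $(a,0,c)$ or $(a,a,c)$. Within each shape the minimum $a$ together with $d$ determines the remaining coefficient, and the two shapes cannot share a minimum: equality $4ac = 4ac' - a^2$ would force $a = 4(c'-c)$ and hence $16\mid \lvert d\rvert$, which is impossible for a fundamental discriminant. Thus distinct forms have distinct minima, the set of minima is a subset of the positive divisors of $\lvert d\rvert$, and
\[
 \sum_f \frac{1}{a} \;\le\; \sum_{a\mid \lvert d\rvert}\frac{1}{a} \;=\; \frac{\sigma(\lvert d\rvert)}{\lvert d\rvert}.
\]

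Finally I would apply Robin's bound $\sigma(n)/n \le e^{\gamma}\log\log n + 0.6483/\log\log n$ for $n\ge 3$ (\cite{Robinomega}), with $e^{\gamma}\approx 1.781$. For $\lvert d\rvert\ge 9.8\cdot 10^{18}$ one has $\log\log\lvert d\rvert\ge 3.77$, so the error term is at most $0.046\log\log\lvert d\rvert$ and $\sigma(\lvert d\rvert)/\lvert d\rvert\le 1.84\log\log\lvert d\rvert$. Combining the displays gives $\lvert C\rvert\le 1.84\,\lvert d\rvert\log\log\lvert d\rvert$, which also dominates $\lvert d\rvert$, so it bounds the height. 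The main obstacle is precisely the distinctness of the minima: without it the two admissible shapes could in principle double-count a divisor, replacing $e^\gamma$ by $2e^\gamma\approx 3.56$ and breaking the constant; the argument above, resting on the fundamentality of $d$ rather than on a crude count of forms, is what keeps the factor at $1.84$.
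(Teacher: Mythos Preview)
Your argument is correct and follows essentially the same route as the paper: show $a\mid\lvert d\rvert$ so that $C\in\mathbb{Z}$, bound $\lvert C\rvert\le\sigma(\lvert d\rvert)$ via the minima being distinct divisors, then apply Robin's inequality to get the constant $1.84$. The paper compresses your distinctness step into the single remark that ``the sum runs over the smaller half of the divisors of $d$''; your explicit check that the two ambiguous shapes cannot share a minimum (else $16\mid\lvert d\rvert$) makes that passage rigorous. One bibliographic slip: the $\sigma(n)/n$ estimate is from \cite{Robinsigma}, not \cite{Robinomega}.
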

\begin{proof}
There are no forms $(a,b,a)$, so the sum $\chi(a)/a$ runs over the smaller half of the divisors of $d$, so 
\begin{equation*}
 \sum_f\frac{\chi(a)}{a} = \frac{C}{d}.
\end{equation*}
where $C$ is an integer. By a result of Robin \cite{Robinsigma}, for $n \geq 3$, 
 \begin{equation*}
  \frac{\sigma(n)}{n}\leq e^{\gamma}\log\log n+\frac{0.649}{\log\log n},
 \end{equation*}
so that $\lvert C\rvert \leq \sigma(d)\leq 1.84\lvert d\rvert\log\log \lvert d\rvert$.

\end{proof}

\subsection{Bound on the remainder}

The minima $a$ are bounded by $\lvert d\rvert/P$, and combined with the previous bound on $k$, this gives
\begin{align*}
 \lvert A_r\rvert &\leq \frac{4\pi}{\sqrt{d}}\lvert r\rvert e^{-\pi\lvert r\rvert\sqrt{\lvert d\rvert}/(ka)}\\
 &\leq\frac{4\pi}{\sqrt{\lvert d\rvert}}\lvert r\rvert e^{-\pi\lvert r\rvert P/(\sqrt{\lvert d\rvert}1.24(\log\lvert d\rvert)^2)},
\end{align*}
and so we have
\begin{align*}
 \left\lvert\sum_{f}\sum_{\substack{r=-\infty\\r\neq0}}^\infty A_re^{\pi irb/(ka)}\right\rvert&\leq\sum_{\substack{r=-\infty\\r\neq0}}^\infty\sum_{f} \left\lvert A_r\right\rvert\\
 &\leq \frac{4\pi h(d)}{\sqrt{\lvert d\rvert}}\sum_{\substack{r=-\infty\\r\neq0}}^\infty \lvert r\rvert e^{-\pi\lvert r\rvert P/(\sqrt{\lvert d\rvert}1.69(\log\lvert d\rvert)^2)}\\
 &=\frac{8\pi h(d)}{\sqrt{\lvert d\rvert}}e^{-\pi P/(\sqrt{\lvert d\rvert}1.69(\log\lvert d\rvert)^2)}\\
 &\quad\quad\quad\cdot(1-e^{-\pi P/(\sqrt{\lvert d\rvert}1.69(\log\lvert d\rvert)^2)})^{-2}\\
 &\leq \frac{27.5 h(d)}{\sqrt{\lvert d\rvert}}e^{-\pi P/(\sqrt{\lvert d\rvert}1.69(\log\lvert d\rvert)^2)},
\end{align*}
where the last line follows as $P\geq1.69\sqrt{d}(\log\lvert d\rvert)^2$, which yields the bound
\begin{equation*}
 (1-e^{-\pi P/(\sqrt{\lvert d\rvert}1.24(\log\lvert d\rvert)^2})\geq(1-e^{-\pi}).
\end{equation*}

\section{The linear form in logarithms}

Putting this together, we have
\begin{equation*}
 \frac{2h(k)\log\epsilon}{\sqrt{k}}\cdot\frac{h(kd)\pi}{\sqrt{k\lvert d\rvert}}=\frac{\pi^2 CQ}{3d}+\sum_{f}\sum_{\substack{r=-\infty\\r\neq0}}^\infty A_re^{\pi irb/(ka)},
\end{equation*}
which yields a linear form in logarithms,
\begin{align*}
  \left\lvert \frac{\sqrt{ \lvert d\rvert}6h(k)h(kd)}{kCQ}\log\epsilon+\pi\right\rvert&= \left\lvert \frac{\sqrt{\lvert d\rvert}6h(k)h(kd)}{kCQ}\log\epsilon-2i\log i\right\rvert\\
  &=\left\lvert \frac{\sqrt{-\lvert d\rvert}3h(k)h(kd)}{kCQ}\log\epsilon+\log i\right\rvert\\
  &\leq\left\lvert\frac{27.5\cdot 3\sqrt{\lvert d\rvert} h(d)}{\pi CQ}e^{-\pi P/(\sqrt{\lvert d\rvert}1.69(\log\lvert d\rvert)^2)}\right\rvert\\
  &\leq 52.6\lvert d\rvert e^{-\pi  P/(\sqrt{\lvert d\rvert}1.69(\log\lvert d\rvert)^2)},
\end{align*}
where the last line follows from  $h(d)<\sqrt{\lvert d\rvert}$, $\lvert C\rvert\geq1$, $\lvert Q\rvert\geq 1/2$.
We now recollect the bounds on the heights of the appearing terms, letting $Q_1$ and $Q_2$ be the denominator and numerator of $Q$ respectively,
\begin{align*}
 H(k)&\leq1.69(\log\lvert d\rvert)^2,\\
 H(h(k))&\leq0.64\log\lvert d\rvert,\\
 H(\sqrt{-\lvert d\rvert}) &= \sqrt{\lvert d\rvert},\\
 H(h(kd))&\leq1.01\sqrt{\lvert d\rvert}(\log\lvert d\rvert)^2,\\
 H(C)&\leq1.84\lvert d\rvert\log\log\lvert d\rvert,\\
 H(Q_1),H(Q_2)&\leq2.61(\log\lvert d\rvert)^4,\\
 H(\epsilon)&\leq \exp(1.69\log\lvert d\rvert\log\log\lvert d\rvert)=:A_1,\\
 H(i)&=1 =:A_2.
\end{align*}
The height of the coefficient $\beta=\frac{\sqrt{-\lvert d\rvert}3h(k)h(kd)Q_1}{kCQ_2}$ is bounded by
\begin{align*}
 H(\beta) &\leq H(\sqrt{-\lvert d\rvert})H\left(\frac{3h(k)h(kd)Q_1}{kCQ_2}\right)\\
 &\leq \sqrt{\lvert d\rvert}1.69(\log\lvert d\rvert)^21.84\lvert d\rvert\log\log\lvert d\rvert2.61(\log\lvert d\rvert)^4\\
 &\leq8.12\lvert d\rvert^{3/2}(\log \lvert d\rvert)^6\log\log\lvert d\rvert=:B.
\end{align*}
We will apply the following theorem of Waldschmidt and Mignotte to this linear form in two logarithms,

\begin{theorem}[Theorem, \cite{Wald}]
 Let $\beta,\alpha_1,\alpha_2$ be three non-zero algebraic numbers of exact degrees $D_0,D_1,D_2$. Let $D$ be the degree of the field $\mathbb{Q}(\beta,\alpha_1,\alpha_2)$ over $\mathbb{Q}$. For $j=1,2$, let $\log\alpha_j$ be any determination of the logarithm of $\alpha_j$, and let $A_j$ be an upper bound for the height of $\alpha_j$ and $\exp\lvert\log\alpha_j\rvert$; further define
 \begin{align*}
  S_0=D_0+\log B, && S_j=D_j+\log A_j,
 \end{align*}
 and assume 
 \begin{equation*}
  \Lambda = \beta\log\alpha_1-\log\alpha_2\neq0.
 \end{equation*}
Let 
 \begin{align*}
  T&=4+\frac{S_0}{D_0}+\log\left(D^2\cdot\frac{S_1}{D_1}\cdot\frac{S_2}{D_2}\right),
 \end{align*}
then 
 \begin{equation*}
  \lvert \Lambda\rvert>\exp\left(-5\cdot10^{8}D^4\cdot\frac{S_1}{D_1}\cdot\frac{S_2}{D_2}\cdot T^2\right).
 \end{equation*}

\end{theorem}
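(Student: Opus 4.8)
This theorem is a measure of linear independence for the two logarithms $\log\alpha_1,\log\alpha_2$ over the field generated by $\beta$, and I would reconstruct it by the classical transcendence scheme of Gel'fond and Baker, in the variant due to Schneider that avoids differentiation. The argument is by contradiction: assume $\Lambda=\beta\log\alpha_1-\log\alpha_2\neq0$ satisfies $\lvert\Lambda\rvert\le\exp(-5\cdot10^{8}D^4(S_1/D_1)(S_2/D_2)T^2)$, and derive a contradiction by exhibiting a nonzero algebraic number that is simultaneously too small analytically and too large arithmetically.

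First I would set up the auxiliary function. Writing $\alpha_1^{\beta}=\alpha_2 e^{\Lambda}$, the closeness of $\alpha_1^{\beta}$ to the algebraic number $\alpha_2$ when $\lvert\Lambda\rvert$ is tiny is the whole point. For integer parameters $L_1,L_2,N$ to be chosen, consider the entire function of one variable
\begin{equation*}
 F(z)=\sum_{\lambda_1=0}^{L_1-1}\sum_{\lambda_2=0}^{L_2-1}p(\lambda_1,\lambda_2)\,\alpha_1^{(\lambda_1+\lambda_2\beta)z},
\end{equation*}
together with its algebraic approximants $\widetilde F(s)=\sum p(\lambda_1,\lambda_2)\alpha_1^{\lambda_1 s}\alpha_2^{\lambda_2 s}$, which lie in $\mathbb{Q}(\beta,\alpha_1,\alpha_2)$ and differ from $F(s)$ only by the factors $e^{\lambda_2 s\Lambda}$. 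I would impose $\widetilde F(s)=0$ for $s=0,1,\dots,N-1$; this is a system of $N$ linear conditions over a field of degree $D$ in the $L_1L_2$ unknowns $p(\lambda_1,\lambda_2)$. Choosing $L_1L_2>DN$ and applying Siegel's lemma produces a nontrivial solution in algebraic integers whose sizes are controlled by $A_1,A_2$ and the height bound $B$ of $\beta$, through the quantities $S_0,S_1,S_2$.

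Next comes the analytic heart of the matter. Because each nonzero $\widetilde F(s)$ has been killed, $F(s)=\widetilde F(s)+(\text{error})$ has size $O(\lvert\Lambda\rvert)$ at $s=0,\dots,N-1$, so the tininess of $\Lambda$ forces $F$ to be extremely small at these points. As $F$ is entire of order one, with type governed by $L_1\log A_1+L_2(\log A_2+\log B)$, a Schwarz-lemma and maximum-modulus estimate propagates this smallness to a larger disc, hence to additional integer points. Evaluating $\widetilde F$ at such a new point yields an algebraic number that is analytically of size well below the Liouville lower bound $\exp(-cD(L_1S_1+L_2S_2))$ valid for any \emph{nonzero} algebraic number of this degree and height; the number is indeed nonzero by a zero estimate, since the monomials $\alpha_1^{\lambda_1}(\alpha_1^{\beta})^{\lambda_2}$ cannot satisfy too many vanishing relations unless $\beta,\log\alpha_1,\log\alpha_2$ are degenerate. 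This is the contradiction.

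Finally I would optimise the parameters. Balancing the Siegel inequality $L_1L_2>DN$ against the analytic gain from extrapolation and the Liouville bound leads to choices of the shape $L_1\asymp D(S_2/D_2)T$, $L_2\asymp D(S_1/D_1)T$, $N\asymp DT$, and feeding these back produces exactly the exponent $D^4(S_1/D_1)(S_2/D_2)T^2$, with $T=4+S_0/D_0+\log(D^2(S_1/D_1)(S_2/D_2))$ absorbing the logarithmic losses incurred at each step. I expect the main obstacle to be twofold: carrying the extrapolation through with enough quantitative precision that the analytic upper bound genuinely beats the arithmetic lower bound, and the explicit bookkeeping of all implied constants so that the final numerical constant does not exceed $5\cdot10^{8}$. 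Securing the zero estimate that guarantees the manufactured algebraic number is nonzero is the other delicate point, since it is precisely what excludes the degenerate cases.
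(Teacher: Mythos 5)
The paper offers no proof of this statement: it is quoted verbatim from Mignotte and Waldschmidt \cite{Wald} and invoked as a black box, so there is no internal proof to compare your argument against. Judged on its own terms, your reconstruction identifies the right family of methods --- indeed the one signalled by the title of the cited paper: Schneider's method, in which an auxiliary function is produced by Siegel's lemma, vanishing is imposed at many interpolation points rather than to high order at few, the smallness is propagated by a Schwarz-lemma extrapolation, and the contradiction comes from playing the analytic upper bound against a Liouville-type lower bound at a point where a zero estimate guarantees nonvanishing. Your parameter balancing ($L_1\asymp D(S_2/D_2)T$, $L_2\asymp D(S_1/D_1)T$, $N\asymp DT$) is also of the correct shape to yield an exponent of the form $D^4(S_1/D_1)(S_2/D_2)T^2$.

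Nevertheless, as a proof of the theorem \emph{as stated} your text has a genuine gap, which you half-concede yourself: the statement is a fully explicit inequality, and essentially all of its content lies in the steps you leave as placeholders. Nothing in the sketch determines the constant $5\cdot10^8$, the additive constants in the $S_j$, or the exact form of $T$; these emerge only from carrying out the Siegel, extrapolation and Liouville estimates with explicit constants, which is the bulk of \cite{Wald}. Two concrete soft spots. First, the zero estimate: with your purely exponential auxiliary function $\sum p(\lambda_1,\lambda_2)\alpha_1^{(\lambda_1+\lambda_2\beta)z}$ evaluated at integers, nonvanishing at some integer reduces to a Vandermonde statement for a generalized power sum with ratios $\alpha_1^{\lambda_1+\lambda_2\beta}$, and you must exclude coincidences among these exponents, e.g.\ when $\beta$ is rational or when $(\lambda_1-\lambda_1')+(\lambda_2-\lambda_2')\beta$ lands in $(2\pi i/\log\alpha_1)\mathbb{Z}$; the hypothesis $\Lambda\neq0$ alone does not hand you this, and handling the degenerate cases is part of the work. (Note also that your one-variable exponential arrangement is really a derivative-free Gel'fond setup; the Schneider construction of Mignotte--Waldschmidt instead interpolates the functions $z$ and $e^z$ at the points $(r+s\beta)\log\alpha_1$ --- workable either way, but the bookkeeping differs.) Second, the claim that $F(s)=\widetilde F(s)+O(\lvert\Lambda\rvert)$ carries hidden factors of size roughly $\sum\lvert p\rvert\,A_1^{L_1N}A_2^{L_2N}$, and it is exactly the fight between this loss and the extrapolation gain that decides whether the method closes; asserting that it does is not yet a proof. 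For the purposes of the present paper none of this matters --- the theorem is rightly used as an imported result --- but your sketch should be read as a correct roadmap rather than a proof.
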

By the Gelfond-Schneider theorem, $\log\epsilon$ and $\log i$ are linearly independent over $\bar{\mathbb{Q}}$, so $\Lambda\neq0$. Applying the theorem to our linear form in logarithms, we have
\begin{align*}
  D_0=D_1=D_2=2, && D=[\mathbb{Q}(\beta,\epsilon,i):\mathbb{Q}]= 8, && S_0&=2+B,\\
  S_1 = 2+\log A_1, && S_2 = 2, && T &= 5+\frac{\log B}{2}+\log(32S_1).
\end{align*}
Now Waldschmidt's result concludes
\begin{equation*}
 \lvert\Lambda\rvert \geq \exp\left(-5\cdot10^8\cdot8^4(1+\log A_1)\left(5+\frac{\log B}{2}+\log(64+32\log A_1)\right)^2\right).
\end{equation*}
Simplifying and using our lower bound $\lvert d\rvert\geq9.8\cdot10^{18}$, we obtain the lower bound
\begin{equation*}
 \lvert\Lambda\rvert \geq \exp\left(-1.2\cdot10^{16} (\log\lvert d\rvert)^3 \log\log\lvert d\rvert\right).
\end{equation*}
We have the upper bound
\begin{equation*}
 \exp\left(-\pi C(\log\lvert d\rvert)^3/1.24+\log\lvert d\rvert+\log50.4\right),
\end{equation*}
and combining it with our lower bound gives
\begin{equation*}
 C(\log\lvert d\rvert)^3\log\log\lvert d\rvert\leq4.8\cdot10^{15}(\log\lvert d\rvert)^3\log\log\lvert d\rvert+1.24\log51.6+1.24\log\lvert d\rvert,
\end{equation*}
so that taking $C=5\cdot10^{15}$ violates this inequality, yielding the theorem.

\end{document}